\title{Methods of Tropical Optimization in Rating Alternatives Based on Pairwise Comparisons}
\author{N. Krivulin\thanks{St.~Petersburg State University, Universitetskaya nab.~7/9, St.~Petersburg, 199034, Russia, nkk@math.spbu.ru.}}
\date{}
\newtheorem{theorem}{Theorem}
\newtheorem{lemma}[theorem]{Lemma}
\theoremstyle{definition}
\newtheorem{example}{Example}
\begin{document}

\maketitle

\begin{abstract}
We apply methods of tropical optimization to handle problems of rating alternatives on the basis of the log-Chebyshev approximation of pairwise comparison matrices. We derive a direct solution in a closed form, and investigate the obtained solution when it is not unique. Provided the approximation problem yields a set of score vectors, rather than a unique (up to a constant factor) one, we find those vectors in the set, which least and most differentiate between the alternatives with the highest and lowest scores, and thus can be representative of the entire solution.
\\

\textbf{Key-Words:} idempotent semifield, tropical optimization, matrix approximation, pairwise comparison, consistent matrix.
\\

\textbf{MSC (2010):} 65K10, 15A80, 65K05, 41A50, 90B50
\end{abstract}

\section{Introduction}

Tropical (idempotent) mathematics, which deals with the theory and applications of semirings with idempotent addition \cite{Golan2003Semirings,Heidergott2006Maxplus}, finds application in operations research, computer science and other fields. Optimization problems that are formulated and solved in the framework of tropical mathematics constitute an important research domain, which offers new solutions to old and novel problems in various applied areas, including project scheduling \cite{Krivulin2016Maximization,Krivulin2015Extremal}, location analysis \cite{Krivulin2015Soving} and decision making \cite{Krivulin2015Rating,Krivulin2016Using}. The problems are usually defined to minimize or maximize functions on vectors over idempotent semifields (semirings with multiplicative inverses).

In this paper, we apply methods of tropical optimization to handle problems of rating alternatives on the basis of the log-Chebyshev approximation of pairwise comparison matrices. We derive a direct solution in a closed form, and investigate the solution when it is not unique. Provided the approximation problem yields a set of score vectors, rather than a unique (up to a constant factor) one, we find those vectors in the set, which least and most differentiate between the alternatives with the highest and lowest scores, and thus can be representative of the entire solution.

\section{Rating Alternatives via Pairwise Comparisons}

The method of rating alternatives from pairwise comparisons finds use in decision making when a direct evaluation of the ratings is unacceptable or infeasible (see, e.g., \cite{Saaty2013Onthemeasurement} for further details). The outcome of the comparisons is described by a square symmetrically reciprocal matrix $\bm{A}=(a_{ij})$, where $a_{ij}$ shows the relative preference of alternative $i$ over $j$, and satisfies the condition $a_{ij}=1/a_{ji}>0$ for all $i,j$.

To provide consistency of the data given by pairwise comparison matrices, the entries of the matrices must be transitive to provide the equality $a_{ij}=a_{ik}a_{kj}$ for all $i,j,k$. A pairwise comparison matrix with only transitive entries is called consistent.

For each consistent matrix $\bm{A}=(a_{ij})$, there is a positive vector $\bm{x}=(x_{i})$ whose elements completely determine the entries of $\bm{A}$ by the relation $a_{ij}=x_{i}/x_{j}$. Provided that a matrix $\bm{A}$ is consistent, its corresponding vector $\bm{x}$ is considered to represent directly, up to a positive factor, the individual scores of alternatives in question.

The pairwise comparison matrices encountered in practice are generally inconsistent, which leads to a problem of approximating these matrices by consistent matrices. To solve the problem, the approximation with the principal eigenvector \cite{Saaty1984Comparison,Saaty2013Onthemeasurement}, least squares approximation \cite{Saaty1984Comparison,Chu1998Ontheoptimal} and other techniques \cite{Barzilai1997Deriving,Farkas2003Consistency,Gonzalezpachon2003Transitive} are used.

Another approach involves the approximation of a reciprocal matrix $\bm{A}=(a_{ij})$ by a consistent matrix $\bm{X}=(x_{ij})$ in the log-Chebyshev sense, where the approximation error is measured with the Chebyshev metric on the logarithmic scale. Since both matrices $\bm{A}$ and $\bm{X}$ have positive entries, and the logarithm is monotone increasing, the error can be written as $\max_{i,j}|\log a_{ij}-\log x_{ij}|=\log\max_{i,j}\max\{a_{ij}/x_{ij},x_{ij}/a_{ij}\}$.

Considering that the minimization of the logarithm is equivalent to minimizing its argument, and that the matrix $\bm{X}$ can be defined through a positive vector $\bm{x}=(x_{i})$ by the equality $x_{ij}=x_{i}/x_{j}$ for all $i,j$, the error function to minimize is replaced by $\max_{i,j}\max\{a_{ij}/x_{ij},x_{ij}/a_{ij}\}=\max_{i,j}\max\{a_{ij}x_{j}/x_{i},a_{ji}x_{i}/x_{j}\}$. The application of the condition $a_{ij}=1/a_{ji}$ yields $\max_{i,j}\max\{a_{ij}x_{j}/x_{i},a_{ji}x_{i}/x_{j}\}=\max_{i,j}a_{ij}x_{j}/x_{i}$, which finally reduces the approximation problem to finding positive vectors $\bm{x}$ to
\begin{equation}
\begin{aligned}
&
\text{minimize}
&&
%\textstyle{\max_{i,j}a_{ij}x_{j}/x_{i}}.
\max_{i,j}a_{ij}x_{j}/x_{i}.
\end{aligned}
\label{P-minaijxjxi}
\end{equation}

Assume that the approximation results in a set $\mathcal{S}$ of score vectors $\bm{x}$, rather than a unique (up to a constant factor) one. Then, further analysis is needed to reduce to a very few representative solutions, such as some ``worst'' and ``best'' solutions.

As the purpose of calculating the scores is to differentiate alternatives, one can concentrate on two vectors $\bm{x}=(x_{i})$ from $\mathcal{S}$, which least and most differentiate between the alternatives with the highest and lowest scores by minimizing and maximizing the contrast ratio $\max_{i}x_{i}/\min_{i}x_{i}=\max_{i}x_{i}\cdot\max_{i}x_{i}^{-1}$. Then, the problem of calculating the least (the most) differentiating solution is to find vectors $\bm{x}\in\mathcal{S}$ that
\begin{equation}
\begin{aligned}
&
\text{minimize (maximize)}
&&
%\textstyle{\max_{i}x_{i}\cdot\max_{i}x_{i}^{-1}}.
\max_{i}x_{i}\cdot\max_{i}x_{i}^{-1}.
\end{aligned}
\label{P-minmaxxixi}
\end{equation}

Below, we reformulate problems \eqref{P-minaijxjxi} and \eqref{P-minmaxxixi} in terms of tropical mathematics, and then apply recent results in tropical optimization to offer complete, direct solutions.

\section{Preliminary Definitions, Notation and Results}
\label{S-PDNR}

We start with a brief overview of the basic definitions and notation of tropical algebra. For further details on tropical mathematics, see, eg, recent publications \cite{Golan2003Semirings,Heidergott2006Maxplus}. 

Consider the set of nonnegative reals $\mathbb{R}_{+}$, which is equipped with two operations, addition $\oplus$ defined as maximum, and multiplication $\otimes$ defined as usual, and has $0$ and $1$ as their neutral elements. Addition is idempotent, since $x\oplus x=\max(x,x)=x$ for all $x\in\mathbb{R}_{+}$. Multiplication is distributive over addition and invertible to give each $x\ne0$ an inverse $x^{-1}$ such that $x\otimes x^{-1}=xx^{-1}=1$. The system $(\mathbb{R}_{+},\oplus,\otimes,0,1)$ is called the idempotent semifield or the max-algebra and denoted $\mathbb{R}_{\max}$. In the sequel, the sign $\otimes$ is omitted for brevity. The power notation has the standard meaning.

The set of matrices over $\mathbb{R}_{+}$ with $m$ rows and $n$ columns is denoted by $\mathbb{R}_{+}^{m\times n}$. A matrix with all zero entries is the zero matrix. A matrix without zero rows is called row-regular. Matrix operations employ the conventional entry-wise formulae, where the scalar operations $\oplus$ and $\otimes$ play the role of the usual addition and multiplication.

The multiplicative conjugate transpose of a nonzero matrix $\bm{A}=(a_{ij})$ is the matrix $\bm{A}^{-}=(a_{ij}^{-})$ with the entries $a_{ij}^{-}=a_{ji}^{-1}$ if $a_{ji}\ne0$, and $a_{ij}^{-}=0$ otherwise.

Consider the square matrices in the set $\mathbb{R}_{+}^{n\times n}$. A matrix with $1$ along the diagonal and $0$ elsewhere is the identity matrix denoted $\bm{I}$. The power notation specifies iterated products as $\bm{A}^{0}=\bm{I}$ and $\bm{A}^{p}=\bm{A}^{p-1}\bm{A}$ for any matrix $\bm{A}$ and integer $p>0$.

The tropical spectral radius of a matrix $\bm{A}=(a_{ij})\in\mathbb{R}_{+}^{n\times n}$ is the scalar given by
\begin{equation}
\lambda
=
\bigoplus_{1\leq k\leq n}\bigoplus_{1\leq i_{1},\ldots,i_{k}\leq n}(a_{i_{1}i_{2}}a_{i_{2}i_{3}}\cdots a_{i_{k}i_{1}})^{1/k}.
\label{E-lambda-ai1i2ai2i3aiki1}
\end{equation}

The asterate operator (the Kleene star) maps the matrix $\bm{A}$ onto the matrix
\begin{equation}
\bm{A}^{\ast}
=
\bm{I}\oplus\bm{A}\oplus\cdots\oplus\bm{A}^{n-1}.
\label{E-Aast}
\end{equation}

The column vectors with $n$ elements form the set $\mathbb{R}_{+}^{n}$. The vectors with all elements equal to $0$ and to $1$ are denoted by $\bm{0}$ and $\bm{1}$. A vector is regular if it has no zero elements. For any nonzero column vector $\bm{x}=(x_{i})$, its conjugate transpose is the row vector $\bm{x}^{-}=(x_{i}^{-})$, where $x_{i}^{-}=x_{i}^{-1}$ if $x_{i}\ne0$, and $x_{i}^{-}=0$ otherwise. 

We conclude the overview with examples of tropical optimization problems. Suppose that, given a matrix $\bm{A}=(a_{ij})\in\mathbb{R}_{+}^{n\times n}$, we need to find vectors $\bm{x}\in\mathbb{R}_{+}^{n}$ that
\begin{equation}
\begin{aligned}
&
\text{minimize}
&&
\bm{x}^{-}\bm{A}\bm{x}.
\end{aligned}
\label{P-minxAx}
\end{equation}

The next complete, direct solution to the problem is obtained in \cite{Krivulin2015Extremal}.
\begin{lemma}
\label{L-minxAx}
Let $\bm{A}$ be a matrix with spectral radius $\lambda>0$. Then, the minimum value in \eqref{P-minxAx} is equal to $\lambda$, and all regular solutions are given by
\begin{equation*}
\bm{x}
=
(\lambda^{-1}\bm{A})^{\ast}\bm{u},
\qquad
\bm{u}\ne\bm{0}.
\end{equation*}
\end{lemma}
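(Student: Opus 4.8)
The plan is to prove the two assertions — that the minimum equals $\lambda$ and that the regular minimizers are exactly the vectors $(\lambda^{-1}\bm{A})^{\ast}\bm{u}$ with $\bm{u}\neq\bm{0}$ — by a sandwich argument on the value of the objective $\bm{x}^{-}\bm{A}\bm{x}$. First I would establish the lower bound: for every regular $\bm{x}$, one has $\bm{x}^{-}\bm{A}\bm{x}\geq\lambda$. The natural route is to expand $\bm{x}^{-}\bm{A}\bm{x}=\bigoplus_{i,j}x_{i}^{-1}a_{ij}x_{j}$ and, for any cyclic index sequence $i_{1},\dots,i_{k},i_{1}$, to bound the product $a_{i_{1}i_{2}}\cdots a_{i_{k}i_{1}}$ by a telescoping product of terms $x_{i_{1}}^{-1}a_{i_{1}i_{2}}x_{i_{2}}$, etc.; since each such term is at most $\bm{x}^{-}\bm{A}\bm{x}$ and the $x$-factors cancel around the cycle, taking $k$-th roots and then the maximum over all cycles as in \eqref{E-lambda-ai1i2ai2i3aiki1} yields $\bm{x}^{-}\bm{A}\bm{x}\geq\lambda$.

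Next I would show the bound is attained, and simultaneously identify the minimizers, by working with the normalized matrix $\bm{B}=\lambda^{-1}\bm{A}$, which has spectral radius $1$. The key facts I would invoke or reprove about $\bm{B}$ are: since its spectral radius is $1$, the series $\bm{B}^{\ast}=\bm{I}\oplus\bm{B}\oplus\bm{B}^{2}\oplus\cdots$ stabilizes at \eqref{E-Aast}, and it satisfies $\bm{B}\bm{B}^{\ast}\leq\bm{B}^{\ast}$ (indeed $\bm{B}\bm{B}^{\ast}\oplus\bm{I}=\bm{B}^{\ast}$). For any $\bm{x}=\bm{B}^{\ast}\bm{u}$ we then get $\bm{A}\bm{x}=\lambda\bm{B}\bm{B}^{\ast}\bm{u}\leq\lambda\bm{B}^{\ast}\bm{u}=\lambda\bm{x}$, hence $\bm{x}^{-}\bm{A}\bm{x}\leq\lambda\,\bm{x}^{-}\bm{x}=\lambda$ (using $\bm{x}^{-}\bm{x}=1$ for regular $\bm{x}$). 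Combined with the lower bound, the value at such $\bm{x}$ is exactly $\lambda$, proving both that the minimum is $\lambda$ and that every vector of the stated form is a minimizer; one also checks that $\bm{B}^{\ast}\bm{u}$ is regular whenever $\bm{u}\neq\bm{0}$, since $\bm{B}^{\ast}\geq\bm{I}$.

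It remains to prove the converse inclusion: every regular minimizer $\bm{x}$ has the form $\bm{B}^{\ast}\bm{u}$. Here the argument is that $\bm{x}^{-}\bm{A}\bm{x}=\lambda$ forces $\bm{A}\bm{x}\leq\lambda\bm{x}$, i.e. $\bm{B}\bm{x}\leq\bm{x}$; iterating gives $\bm{B}^{k}\bm{x}\leq\bm{x}$ for all $k\geq0$, whence $\bm{B}^{\ast}\bm{x}\leq\bm{x}$, while $\bm{B}^{\ast}\geq\bm{I}$ gives the reverse, so $\bm{B}^{\ast}\bm{x}=\bm{x}$; thus $\bm{x}$ itself serves as the required $\bm{u}$ (it is nonzero since it is regular). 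I expect the main obstacle to be the lower-bound step: carefully organizing the telescoping-over-cycles estimate so that the $k$-th-root expression in the definition of $\lambda$ emerges cleanly, and making sure the cancellation of the $x_{i}$ factors is handled rigorously in the idempotent-semifield setting where one must argue with $\leq$ rather than equalities throughout.
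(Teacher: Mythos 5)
The paper itself gives no proof of this lemma: it is imported verbatim from the reference cited just above it, so your argument can only be judged on its own terms. On those terms it is essentially the standard proof and it is sound in all three main steps: the telescoping-over-cycles estimate gives $\bm{x}^{-}\bm{A}\bm{x}\geq\lambda$ for every regular $\bm{x}$ (each factor $x_{i_t}^{-1}a_{i_ti_{t+1}}x_{i_{t+1}}$ is bounded by the objective and the $x$-factors cancel around the cycle, so every cycle mean is at most $\bm{x}^{-}\bm{A}\bm{x}$); the inequality $\bm{B}\bm{B}^{\ast}\leq\bm{B}^{\ast}$ for $\bm{B}=\lambda^{-1}\bm{A}$ (valid because the spectral radius of $\bm{B}$ is $1$, so $\bm{B}^{n}\leq\bm{B}^{\ast}$ by deleting a cycle of weight at most $1$ from any path of length $n$) gives $\bm{x}^{-}\bm{A}\bm{x}\leq\lambda$ at $\bm{x}=\bm{B}^{\ast}\bm{u}$; and for a regular minimizer, $\bm{x}^{-}\bm{A}\bm{x}=\lambda$ indeed forces $\bm{B}\bm{x}\leq\bm{x}$, hence $\bm{B}^{\ast}\bm{x}=\bm{x}$, so $\bm{x}$ is its own generator. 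The one inaccurate side remark is the claim that $\bm{B}^{\ast}\bm{u}$ is regular for every $\bm{u}\neq\bm{0}$ because $\bm{B}^{\ast}\geq\bm{I}$: this fails in general (take $\bm{A}=\bm{I}$ in $\mathbb{R}_{\max}$, so $\bm{B}^{\ast}=\bm{I}$, and $\bm{u}=(1,0,\ldots,0)^{T}$); $\bm{B}^{\ast}\geq\bm{I}$ only guarantees regularity when $\bm{u}$ itself is regular, or when $\bm{B}^{\ast}$ has regular columns (as in the pairwise-comparison application, where all entries of $\bm{A}$ are positive). This does not damage the lemma: for attainment of the minimum over regular vectors it suffices to take one regular generator, say $\bm{u}=\bm{1}$, and the lemma only asserts that the regular solutions are among the vectors $\bm{B}^{\ast}\bm{u}$, which your converse step establishes; but you should phrase the regularity check accordingly rather than claim it for arbitrary nonzero $\bm{u}$.
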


Given a matrix $\bm{A}\in\mathbb{R}_{+}^{m\times n}$ and vectors $\bm{p}\in\mathbb{R}_{+}^{m}$, $\bm{q}\in\mathbb{R}_{+}^{n}$, we now find $\bm{x}\in\mathbb{R}_{+}^{n}$ that
\begin{equation}
\begin{aligned}
&
\text{minimize}
&&
\bm{q}^{-}\bm{x}(\bm{A}\bm{x})^{-}\bm{p}.
\end{aligned}
\label{P-minqxAxp}
\end{equation}

A solution given by \cite{Krivulin2015Soving} uses a sparsification technique to provide the next result.
\begin{lemma}
\label{L-minqxAxp}
Let $\bm{A}=(a_{ij})$ be a row-regular matrix, $\bm{p}=(p_{i})$ be nonzero and $\bm{q}=(q_{j})$ be regular vectors, and $\Delta=(\bm{A}\bm{q})^{-}\bm{p}$. Let $\widehat{\bm{A}}=(\widehat{a}_{ij})$ denote the matrix with entries
\begin{equation*}
\widehat{a}_{ij}
=
\begin{cases}
a_{ij}, & \text{if $a_{ij}\geq\Delta^{-1}p_{i}q_{j}^{-1}$};
\\
0, & \text{otherwise}.
\end{cases}
\end{equation*}

Let $\mathcal{A}$ be the set of matrices obtained from $\widehat{\bm{A}}$ by fixing one nonzero entry in each row and setting the others to $0$.

Then, the minimum value in problem \eqref{P-minqxAxp} is equal to $\Delta=(\bm{A}\bm{q})^{-}\bm{p}$, and all regular solutions are given by the conditions
\begin{equation*}
\bm{x}
=
(\bm{I}\oplus\Delta^{-1}\bm{A}_{1}^{-}\bm{p}\bm{q}^{-})\bm{u},
\qquad
\bm{u}\ne\bm{0},
\qquad
\bm{A}_{1}\in\mathcal{A}.
\end{equation*}
\end{lemma}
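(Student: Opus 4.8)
The plan is to handle the two scalar factors of the objective separately and then run a sparsification argument for the solution set. Write $f(\bm{x})=\bm{q}^{-}\bm{x}\cdot(\bm{A}\bm{x})^{-}\bm{p}$, a product of two scalars. I would first establish $f(\bm{x})\geq\Delta$ for every regular $\bm{x}$: since $\bm{q}^{-}\bm{x}=\bigoplus_{j}q_{j}^{-1}x_{j}\geq q_{j}^{-1}x_{j}$, we have $\bm{x}\leq(\bm{q}^{-}\bm{x})\bm{q}$ entrywise; multiplying by the row-regular $\bm{A}$ gives $\bm{A}\bm{x}\leq(\bm{q}^{-}\bm{x})\bm{A}\bm{q}$ with $\bm{A}\bm{q}$ regular, so passing to conjugates reverses the inequality, $(\bm{A}\bm{x})^{-}\geq(\bm{q}^{-}\bm{x})^{-1}(\bm{A}\bm{q})^{-}$, hence $(\bm{A}\bm{x})^{-}\bm{p}\geq(\bm{q}^{-}\bm{x})^{-1}\Delta$. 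Multiplying back by the positive scalar $\bm{q}^{-}\bm{x}$ yields $f(\bm{x})\geq\Delta$, while $f(\bm{q})=(\bm{q}^{-}\bm{q})\Delta=\Delta$, so the minimum equals $\Delta$.

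Next I would characterize the regular minimizers. Retracing the chain above, a regular $\bm{x}$ is optimal iff $(\bm{A}\bm{x})^{-}\bm{p}=(\bm{q}^{-}\bm{x})^{-1}\Delta$; since the reverse inequality always holds, this is equivalent to $p_{i}/(\bm{A}\bm{x})_{i}\leq(\bm{q}^{-}\bm{x})^{-1}\Delta$ for every $i$, that is $(\bm{A}\bm{x})_{i}=\bigoplus_{j}a_{ij}x_{j}\geq\Delta^{-1}(\bm{q}^{-}\bm{x})p_{i}$, equivalently: for each row $i$ there is a column $j$ with $a_{ij}x_{j}\geq\Delta^{-1}(\bm{q}^{-}\bm{x})p_{i}$. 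Because $(\bm{q}^{-}\bm{x})q_{j}\geq x_{j}$, such a column automatically satisfies $a_{ij}\geq\Delta^{-1}p_{i}q_{j}^{-1}$, i.e.\ it is one of the entries kept in $\widehat{\bm{A}}$; in particular $\widehat{\bm{A}}$ is row-regular, so $\mathcal{A}$ is nonempty.

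The sparsification step turns the ``one column per row'' description into the stated parametric form. Fixing one kept entry in each row is precisely the choice of a matrix $\bm{A}_{1}\in\mathcal{A}$, and, collected by column, the conditions above read $\bm{x}\geq\Delta^{-1}(\bm{q}^{-}\bm{x})\bm{A}_{1}^{-}\bm{p}$ (columns selected by no row impose nothing, as the corresponding entry of $\bm{A}_{1}^{-}\bm{p}$ is $0$). A short computation using that $\bm{A}_{1}$ has a single nonzero entry in each row, together with the threshold defining $\widehat{\bm{A}}$, gives $\bm{q}^{-}\bm{A}_{1}^{-}\bm{p}\leq\Delta$. Hence for $\bm{x}=(\bm{I}\oplus\Delta^{-1}\bm{A}_{1}^{-}\bm{p}\bm{q}^{-})\bm{u}=\bm{u}\oplus\Delta^{-1}(\bm{q}^{-}\bm{u})\bm{A}_{1}^{-}\bm{p}$ one finds $\bm{q}^{-}\bm{x}=(\bm{q}^{-}\bm{u})(1\oplus\Delta^{-1}\bm{q}^{-}\bm{A}_{1}^{-}\bm{p})=\bm{q}^{-}\bm{u}$, and then $\bm{x}\geq\Delta^{-1}(\bm{q}^{-}\bm{u})\bm{A}_{1}^{-}\bm{p}=\Delta^{-1}(\bm{q}^{-}\bm{x})\bm{A}_{1}^{-}\bm{p}$, so $\bm{x}$ is optimal. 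Conversely, a regular minimizer $\bm{x}$ yields, through a witnessing column for each row, a matrix $\bm{A}_{1}\in\mathcal{A}$ with $\Delta^{-1}(\bm{q}^{-}\bm{x})\bm{A}_{1}^{-}\bm{p}\leq\bm{x}$, whence $\bm{x}=\bm{x}\oplus\Delta^{-1}(\bm{q}^{-}\bm{x})\bm{A}_{1}^{-}\bm{p}=(\bm{I}\oplus\Delta^{-1}\bm{A}_{1}^{-}\bm{p}\bm{q}^{-})\bm{x}$, i.e.\ $\bm{x}$ has the required form with $\bm{u}=\bm{x}$.

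The scalar arithmetic and the order-reversal under conjugation are routine; the place that will need care is the sparsification bookkeeping in the last step --- matching the family $\mathcal{A}$ with the admissible selections of one column per row, moving correctly between the entrywise inequality $\bm{x}\geq\Delta^{-1}(\bm{q}^{-}\bm{x})\bm{A}_{1}^{-}\bm{p}$ and the row-wise witnessing conditions, and, above all, verifying that the scalar factor $\bm{q}^{-}\bm{x}$ is unchanged along the construction, which rests on $\bm{q}^{-}\bm{A}_{1}^{-}\bm{p}\leq\Delta$ coming from the threshold in the definition of $\widehat{\bm{A}}$. Alternatively, the whole statement can be obtained by specializing to problem \eqref{P-minqxAxp} the general sparsification-based solution established in \cite{Krivulin2015Soving}.
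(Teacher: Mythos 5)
Your proof is correct. Note that the paper itself does not prove Lemma~\ref{L-minqxAxp} but quotes it from \cite{Krivulin2015Soving}, which---as the paper remarks---rests on exactly the sparsification technique you reconstruct, so your argument is in essence the intended one: the lower bound $f(\bm{x})\geq\Delta$ via $\bm{x}\leq(\bm{q}^{-}\bm{x})\bm{q}$ with attainment at $\bm{x}=\bm{q}$, the row-wise witness characterization of regular minimizers forcing the witnesses into $\widehat{\bm{B}}$-style kept entries of $\widehat{\bm{A}}$, and the key verification $\bm{q}^{-}\bm{A}_{1}^{-}\bm{p}\leq\Delta$ that makes the parametric family $(\bm{I}\oplus\Delta^{-1}\bm{A}_{1}^{-}\bm{p}\bm{q}^{-})\bm{u}$ both optimal and exhaustive (taking $\bm{u}=\bm{x}$ in the converse).
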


Finally, we consider a maximization version of problem \eqref{P-minqxAxp} to find vectors $\bm{x}$ that
\begin{equation}
\begin{aligned}
&
\text{maximize}
&&
\bm{q}^{-}\bm{x}(\bm{A}\bm{x})^{-}\bm{p}.
\end{aligned}
\label{P-maxqxAxp}
\end{equation}

A complete solution to the problem is obtained in \cite{Krivulin2016Maximization}. Below, we describe this solution in a more compact vector form using the representation lemma in \cite{Krivulin2015Soving}.
\begin{lemma}
\label{L-maxqxAxp}
Let $\bm{A}=(\bm{a}_{j})$ be a matrix with regular columns $\bm{a}_{j}=(a_{ij})$, and $\bm{p}=(p_{i})$ and $\bm{q}=(q_{j})$ be regular vectors. Let $\bm{A}_{sk}$ denote the matrix obtained from $\bm{A}$ by fixing the entry $a_{sk}$ for some indices $s$ and $k$, and replacing the other entries by $0$.

Then, the maximum value in \eqref{P-maxqxAxp} is equal to $\Delta=\bm{q}^{-}\bm{A}^{-}\bm{p}$, and all regular solutions are given by
\begin{equation*}
\bm{x}
=
(\bm{I}\oplus\bm{A}_{sk}^{-}\bm{A})\bm{u},
\quad
\bm{u}\ne\bm{0},
\quad
k
=
\arg\max_{j}q_{j}^{-1}\bm{a}_{j}^{-}\bm{p},
\quad
s
=
\arg\max_{i}a_{ik}^{-1}p_{i}.
\end{equation*}
\end{lemma}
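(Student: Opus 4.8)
The plan is to reduce the maximization problem \eqref{P-maxqxAxp} to a scalar extremal problem by substitution, solve that scalar problem explicitly, and then recover the full solution set via the representation lemma of \cite{Krivulin2015Soving}. First I would observe that the objective $\bm{q}^{-}\bm{x}(\bm{A}\bm{x})^{-}\bm{p}$ is a product of two positive scalars (for regular $\bm{x}$, using regularity of $\bm{p}$, $\bm{q}$ and the columns of $\bm{A}$), and that the function is invariant under scaling $\bm{x}\mapsto\alpha\bm{x}$; hence it suffices to bound it above and exhibit vectors attaining the bound. To get the upper bound, I would work componentwise: writing $\bm{A}\bm{x}=\bigoplus_{j}x_{j}\bm{a}_{j}$ and using $(\bm{A}\bm{x})^{-}\bm{p}=\bigoplus_{i}p_{i}/(\bm{A}\bm{x})_{i}$, I would note that $(\bm{A}\bm{x})_{i}\geq a_{ik}x_{k}$ for every fixed $k$, and similarly $\bm{q}^{-}\bm{x}=\bigoplus_{j}x_{j}/q_{j}\geq x_{k}/q_{k}$. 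Combining these with the choice of $s$ maximizing $a_{ik}^{-1}p_{i}$ and of $k$ maximizing $q_{j}^{-1}\bm{a}_{j}^{-}\bm{p}$ should produce the value $\Delta=\bm{q}^{-}\bm{A}^{-}\bm{p}$ as the least upper bound; I would verify that $\Delta=\bigoplus_{j}q_{j}^{-1}\bigoplus_{i}a_{ij}^{-1}p_{i}=q_{k}^{-1}a_{sk}^{-1}p_{s}$ under these argmax choices.

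Next I would exhibit a vector attaining $\Delta$. The natural candidate is $\bm{x}=\bm{e}_{k}$ (the $k$th unit vector in $\mathbb{R}_{\max}$, i.e. $\bm{1}$ in position $k$ and $\bm{0}$ elsewhere), or more generally any $\bm{x}$ "supported enough at $k$"; for such $\bm{x}$ one gets $\bm{A}\bm{x}=x_{k}\bm{a}_{k}$, so $(\bm{A}\bm{x})^{-}\bm{p}=x_{k}^{-1}\bm{a}_{k}^{-}\bm{p}$ and $\bm{q}^{-}\bm{x}=x_{k}q_{k}^{-1}$, giving the objective value $q_{k}^{-1}\bm{a}_{k}^{-}\bm{p}=\Delta$. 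This settles the optimal value. The more substantial part is characterizing \emph{all} regular maximizers. Here I would invoke Lemma~\ref{L-maxqxAxp}'s source results: a regular $\bm{x}$ is optimal iff both inequalities used in the upper-bound argument are tight, i.e. $\bm{q}^{-}\bm{x}=x_{k}/q_{k}$ and $p_{s}/(\bm{A}\bm{x})_{s}=\bigoplus_{i}p_{i}/(\bm{A}\bm{x})_{i}$ with $(\bm{A}\bm{x})_{s}=a_{sk}x_{k}$. Translating "$(\bm{A}\bm{x})_{s}=a_{sk}x_{k}$" means $a_{sj}x_{j}\leq a_{sk}x_{k}$ for all $j$, i.e. $x_{j}\leq a_{sj}^{-1}a_{sk}x_{k}=(\bm{A}_{sk}^{-}\bm{A})_{jk}x_{k}$ — this is exactly the constraint encoded by $\bm{x}\leq(\bm{A}_{sk}^{-}\bm{A})\bm{x}$ once one checks the remaining columns of $\bm{A}_{sk}^{-}\bm{A}$ are dominated. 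I would then apply the representation lemma of \cite{Krivulin2015Soving}, which says the solution set of a system of the form $\bm{x}=(\bm{I}\oplus\bm{B})\bm{u}$, $\bm{u}\neq\bm{0}$, captures precisely the regular vectors satisfying $\bm{x}\leq\bm{B}\bm{x}$ together with the attainment conditions, yielding $\bm{x}=(\bm{I}\oplus\bm{A}_{sk}^{-}\bm{A})\bm{u}$.

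I expect the main obstacle to be the last step: showing that the pointwise optimality conditions (tightness of the two max-inequalities at the designated indices $s,k$) are \emph{equivalent} to membership in the parametric family $\bm{x}=(\bm{I}\oplus\bm{A}_{sk}^{-}\bm{A})\bm{u}$, rather than merely sufficient. One direction — that every such $\bm{x}$ is optimal — is a short verification. The converse requires care: one must show that an arbitrary regular maximizer necessarily has $\bm{q}^{-}\bm{x}$ achieved at coordinate $k$ and $(\bm{A}\bm{x})^{-}\bm{p}$ achieved at coordinate $s$ (this uses that $k$ and $s$ are the unique-up-to-ties argmax indices), and then that the resulting inequalities on the other coordinates of $\bm{x}$ collapse exactly to $\bm{x}\leq\bm{A}_{sk}^{-}\bm{A}\bm{x}$; extra columns of $\bm{A}_{sk}^{-}\bm{A}$ beyond the $k$th must be shown to impose no further restriction (their entries being $0$ outside column $k$ by construction of $\bm{A}_{sk}$). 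Once this equivalence is in hand, the compact vector form follows by citing \cite{Krivulin2016Maximization} for correctness of the set and \cite{Krivulin2015Soving} for rewriting the scalarized description in the stated matrix form.
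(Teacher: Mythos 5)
Your overall plan (upper bound on the objective, attainment, then a tightness characterization converted into a parametric family) is the natural route; note, though, that the paper itself gives no proof of Lemma~\ref{L-maxqxAxp} — it imports the result from \cite{Krivulin2016Maximization} and compacts it with the representation lemma of \cite{Krivulin2015Soving} — so your sketch must stand on its own, and as written two steps would fail. First, the upper-bound argument: with the \emph{fixed} argmax index $k$, the inequalities $(\bm{A}\bm{x})_{i}\geq a_{ik}x_{k}$ and $\bm{q}^{-}\bm{x}\geq x_{k}/q_{k}$ point in opposite directions — the first bounds $(\bm{A}\bm{x})^{-}\bm{p}$ from above by $x_{k}^{-1}\bm{a}_{k}^{-}\bm{p}$, but the second is a \emph{lower} bound on the other factor, and their product does not bound the objective by $\Delta$. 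The correct argument chooses the index adapted to $\bm{x}$: let $m$ attain $\bm{q}^{-}\bm{x}$, so that $\bm{q}^{-}\bm{x}=x_{m}q_{m}^{-1}$ exactly; then $(\bm{A}\bm{x})^{-}\bm{p}\leq x_{m}^{-1}\bm{a}_{m}^{-}\bm{p}$ gives $\bm{q}^{-}\bm{x}(\bm{A}\bm{x})^{-}\bm{p}\leq q_{m}^{-1}\bm{a}_{m}^{-}\bm{p}\leq\Delta$.

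Second, the encoding of optimality is reversed. Tightness $(\bm{A}\bm{x})_{s}=a_{sk}x_{k}$, i.e. $a_{sj}x_{j}\leq a_{sk}x_{k}$ for all $j$, reads in matrix form $\bm{A}_{sk}^{-}\bm{A}\,\bm{x}\leq\bm{x}$ (only row $k$ of $\bm{A}_{sk}^{-}\bm{A}$ is nonzero, with entries $a_{sk}^{-1}a_{sj}$), \emph{not} $\bm{x}\leq\bm{A}_{sk}^{-}\bm{A}\,\bm{x}$; accordingly the representation fact you need is that the regular solutions of $\bm{B}\bm{x}\leq\bm{x}$ are exactly $\bm{x}=\bm{B}^{\ast}\bm{u}$, and here $\bm{B}^{\ast}=\bm{I}\oplus\bm{B}$ because $\bm{B}=\bm{A}_{sk}^{-}\bm{A}$ satisfies $\bm{B}^{2}=\bm{B}$ (its $(k,k)$ entry is $1$). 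Moreover the completeness direction, which you only flag, is the actual content and is not hard once the corrected upper bound is in place: if the objective equals $\Delta$, take $m$ attaining $\bm{q}^{-}\bm{x}$; the chain $\Delta\leq q_{m}^{-1}\bm{a}_{m}^{-}\bm{p}\leq\Delta$ forces $m$ to lie in the argmax defining $k$, and equality $(\bm{A}\bm{x})^{-}\bm{p}=x_{m}^{-1}\bm{a}_{m}^{-}\bm{p}$ then produces an index $i^{\ast}$ in the argmax defining $s$ with $(\bm{A}\bm{x})_{i^{\ast}}=a_{i^{\ast}m}x_{m}$. This also shows that with ties the family captures all maximizers only when $(s,k)$ ranges over \emph{all} argmax choices: for a single fixed pair the family can miss solutions (e.g. $\bm{A}=\bm{1}\bm{1}^{T}$, $\bm{p}=\bm{q}=\bm{1}$, where every regular $\bm{x}$ is optimal but one fixed $(s,k)$ yields only the vectors with $x_{k}\geq x_{j}$ for all $j$), so your appeal to ``unique-up-to-ties'' indices does not resolve this. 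For sufficiency, by contrast, the single condition $(\bm{A}\bm{x})_{s}=a_{sk}x_{k}$ already gives $\bm{q}^{-}\bm{x}(\bm{A}\bm{x})^{-}\bm{p}\geq x_{k}q_{k}^{-1}p_{s}a_{sk}^{-1}x_{k}^{-1}=\Delta$, so the other tightness conditions you impose are automatic.
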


\section{Application to Rating Alternatives}

We are now in a position to represent optimization problems \eqref{P-minaijxjxi} and \eqref{P-minmaxxixi} stated above in the tropical mathematics setting, and then to solve them in an explicit form.

Consider problem \eqref{P-minaijxjxi} of evaluating the score vector based on the log-Chebyshev approximation of a pairwise comparison matrix $\bm{A}$. In terms of the max-algebra $\mathbb{R}_{\max}$ the problem takes the form \eqref{P-minxAx}. Application of Lemma~\ref{L-minxAx} yields the following result.
\begin{theorem}
\label{T-minxAx}
Let $\bm{A}$ be a pairwise comparison matrix with spectral radius $\lambda$, and denote $\bm{A}_{\lambda}=\lambda^{-1}\bm{A}$ and $\bm{B}=\bm{A}_{\lambda}^{\ast}$. Then, all score vectors are given by
\begin{equation*}
\bm{x}
=
\bm{B}\bm{u},
\qquad
\bm{u}\ne\bm{0}.
\end{equation*}
\end{theorem}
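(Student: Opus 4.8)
The plan is to recognize that Theorem~\ref{T-minxAx} is essentially a direct translation of Lemma~\ref{L-minxAx} into the language of pairwise comparison matrices, so the bulk of the work is a bookkeeping argument plus one verification that the hypothesis of the lemma is met. First I would observe that problem~\eqref{P-minaijxjxi}, namely minimizing $\max_{i,j} a_{ij} x_j / x_i$ over positive vectors $\bm{x}$, is literally the scalar expansion of the tropical objective $\bm{x}^{-}\bm{A}\bm{x}$ in $\mathbb{R}_{\max}$: the product $\bm{A}\bm{x}$ has $i$-th component $\bigoplus_j a_{ij} x_j = \max_j a_{ij} x_j$, and left-multiplying by $\bm{x}^{-} = (x_i^{-1})$ and summing tropically yields $\bigoplus_i x_i^{-1} \bigoplus_j a_{ij} x_j = \max_{i,j} a_{ij} x_j / x_i$. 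Hence~\eqref{P-minaijxjxi} coincides with~\eqref{P-minxAx} for the given $\bm{A}$, and positive vectors are exactly the regular vectors of $\mathbb{R}_{+}^{n}$.

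Next I would check that $\bm{A}$ satisfies the hypothesis $\lambda > 0$ of Lemma~\ref{L-minxAx}. Since $\bm{A}$ is a pairwise comparison matrix, all entries are positive; in particular the diagonal entries satisfy $a_{ii} = a_{ii} a_{ii}$ forcing $a_{ii} = 1$ (or one simply notes $a_{ii} = 1/a_{ii}$ gives $a_{ii}=1$), so the length-one cyclic products $a_{ii} = 1$ already appear in the defining supremum~\eqref{E-lambda-ai1i2ai2i3aiki1}, giving $\lambda \ge 1 > 0$. Therefore Lemma~\ref{L-minxAx} applies verbatim: the minimum of~\eqref{P-minxAx} is $\lambda$ and all regular solutions are $\bm{x} = (\lambda^{-1}\bm{A})^{\ast} \bm{u}$ with $\bm{u} \ne \bm{0}$.

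Finally I would unwind the notation. By definition $\bm{A}_{\lambda} = \lambda^{-1}\bm{A}$ and $\bm{B} = \bm{A}_{\lambda}^{\ast} = (\lambda^{-1}\bm{A})^{\ast}$, so the solution set furnished by the lemma is precisely $\{\bm{B}\bm{u} : \bm{u} \ne \bm{0}\}$. This is the claimed description of all score vectors, completing the argument. The only subtlety worth a sentence in the write-up is the identification of ``score vectors'' with regular solutions of~\eqref{P-minxAx}: a score vector is by construction a positive vector solving~\eqref{P-minaijxjxi}, and conversely every regular minimizer is a legitimate positive score vector, so the two sets genuinely coincide and no minimizers are lost. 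I do not expect a real obstacle here; the one place to be careful is making sure the translation of the minimax objective into $\bm{x}^{-}\bm{A}\bm{x}$ is stated cleanly, since everything else is an immediate appeal to Lemma~\ref{L-minxAx}.
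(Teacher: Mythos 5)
Your proposal is correct and follows exactly the route the paper takes: it rewrites the objective $\max_{i,j}a_{ij}x_{j}/x_{i}$ as the tropical form $\bm{x}^{-}\bm{A}\bm{x}$ of problem~\eqref{P-minxAx} and then applies Lemma~\ref{L-minxAx} with $\bm{B}=(\lambda^{-1}\bm{A})^{\ast}$. Your extra checks (that $\lambda\geq 1>0$ since $a_{ii}=1$, and that score vectors coincide with regular minimizers) are sound and merely make explicit what the paper leaves implicit.
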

\begin{example}\label{X-A}
Suppose the result of comparing $n=4$ alternatives is given by the matrix
\begin{equation}
\bm{A}=\left(
\begin{matrix}
1 & 1/3 & 1/2 & 1/3
\\
3 & 1 & 4 & 1
\\
2 & 1/4 & 1 & 2
\\
3 & 1 & 1/2 & 1
\end{matrix}
\right).
\label{E-A}
\end{equation}

To apply Theorem~\ref{T-minxAx}, we use \eqref{E-lambda-ai1i2ai2i3aiki1} to find $\lambda=(a_{23}a_{34}a_{42})^{1/3}=2$, and calculate
\begin{equation*}
\bm{A}_{\lambda}
=
\left(
\begin{matrix}
1/2 & 1/6 & 1/4 & 1/6
\\
3/2 & 1/2 & 2 & 1/2
\\
1 & 1/8 & 1/2 & 1
\\
3/2 & 1/2 & 1/4 & 1/2
\end{matrix}
\right).
\end{equation*}

Then, we follow \eqref{E-Aast} to compute
\begin{equation*}
\bm{A}_{\lambda}^{\ast}
=
\left(
\begin{matrix}
1 & 1/6 & 1/3 & 1/3
\\
3 & 1 & 2 & 2
\\
3/2 & 1/2 & 1 & 1
\\
3/2 & 1/2 & 1 & 1
\end{matrix}
\right).
\end{equation*}

As the last three columns of the matrix $\bm{A}_{\lambda}^{\ast}$ are collinear, we take one of them, say, the second. Combining with the first column multiplied by $1/3$ leads to the solution
\begin{equation}
\bm{x}
=
\bm{B}\bm{u},
\qquad
\bm{B}
=
\left(
\begin{matrix}
1/3 & 1/6
\\
1 & 1
\\
1/2 & 1/2
\\
1/2 & 1/2
\end{matrix}
\right),
\qquad
\bm{u}
=
(u_{1},u_{2})^{T},
\qquad
u_{1},u_{2}
\ne
0.
\label{E-xBu}
\end{equation}

Note that all the solutions assign the highest score to the second alternative and the lowest to the first. Moreover, the solutions which least and most differentiate between these alternatives, are the first and the second columns in the matrix $\bm{B}$. 
\end{example}

In the general case, the least and most differentiating solutions from a set of vectors, given in the form $\bm{x}=\bm{B}\bm{u}$, are determined by solving problems \eqref{P-minmaxxixi}. The problems are to minimize and maximize the contrast ratio for the elements of the vector $\bm{x}$, which, in terms of tropical mathematics, takes the form $\bm{1}^{T}\bm{x}\bm{x}^{-}\bm{1}=\bm{1}^{T}\bm{B}\bm{u}(\bm{B}\bm{u})^{-}\bm{1}$. 

To find a vector $\bm{x}=\bm{B}\bm{u}$ with the least differentiation between scores, we solve the problem
\begin{equation*}
\begin{aligned}
&
\text{minimize}
&&
\bm{1}^{T}\bm{B}\bm{u}(\bm{B}\bm{u})^{-}\bm{1}.
\end{aligned}
%\label{P-min1BuBu1}
\end{equation*}

Assuming the matrix $\bm{B}$ is obtained as in Theorem~\ref{T-minxAx}, we have the next result.
\begin{theorem}\label{T-min1BuBu1}
Let $\widehat{\bm{B}}$ be a sparsified matrix derived from $\bm{B}$ by setting to $0$ all entries below $\Delta^{-1}=((\bm{B}(\bm{1}^{T}\bm{B})^{-})^{-}\bm{1})^{-1}$, and $\mathcal{B}$ be the set of matrices obtained from $\widehat{\bm{B}}$ by fixing one nonzero entry in each row and setting the others to $0$. Then, the least differentiating score vectors are given by
\begin{equation*}
\bm{x}
=
\bm{B}(\bm{I}\oplus\Delta^{-1}\bm{B}_{1}^{-}\bm{1}\bm{1}^{T}\bm{B})\bm{v},
\qquad
\bm{v}\ne\bm{0},
\qquad
\bm{B}_{1}\in\mathcal{B}.
\end{equation*}
\end{theorem}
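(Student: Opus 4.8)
The plan is to recognize the stated problem as an instance of problem \eqref{P-minqxAxp} and then invoke Lemma~\ref{L-minqxAxp}, translating its conclusion back into the language of score vectors. Writing the objective as $\bm{1}^{T}\bm{B}\bm{u}(\bm{B}\bm{u})^{-}\bm{1} = (\bm{1}^{T}\bm{B})\bm{u}(\bm{B}\bm{u})^{-}\bm{1}$, I would take, in the notation of \eqref{P-minqxAxp}, the matrix to be $\bm{B}$, the vector $\bm{p}$ to be $\bm{1}$, the vector $\bm{q}$ to be $(\bm{1}^{T}\bm{B})^{-}$, and treat our decision vector $\bm{u}$ as the decision variable $\bm{x}$ in \eqref{P-minqxAxp}. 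Since $\bm{A}$ is a pairwise comparison matrix, all its entries, hence all entries of $\bm{A}_{\lambda} = \lambda^{-1}\bm{A}$, and therefore of $\bm{B} = \bm{A}_{\lambda}^{\ast}$ (because $\bm{A}_{\lambda}^{\ast} \geq \bm{A}_{\lambda}$), are positive; in particular $\bm{B}$ is row-regular, the row vector $\bm{1}^{T}\bm{B}$ is regular, so $\bm{q} = (\bm{1}^{T}\bm{B})^{-}$ is a regular vector, and $\bm{p} = \bm{1}$ is nonzero. Thus the hypotheses of Lemma~\ref{L-minqxAxp} are met.

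I would then substitute into the conclusion of Lemma~\ref{L-minqxAxp}. The minimum value becomes $\Delta = (\bm{B}\bm{q})^{-}\bm{p} = (\bm{B}(\bm{1}^{T}\bm{B})^{-})^{-}\bm{1}$, which is exactly the quantity whose inverse is the $\Delta^{-1}$ of the statement; the sparsified matrix $\widehat{\bm{B}}$ and the family $\mathcal{B}$ of one-nonzero-per-row matrices are the objects $\widehat{\bm{A}}$ and $\mathcal{A}$ of the lemma (modulo the threshold point discussed below); and the regular minimizers of the $\bm{u}$-problem are $\bm{u} = (\bm{I} \oplus \Delta^{-1}\bm{B}_{1}^{-}\bm{p}\bm{q}^{-})\bm{v} = (\bm{I} \oplus \Delta^{-1}\bm{B}_{1}^{-}\bm{1}\bm{1}^{T}\bm{B})\bm{v}$ for $\bm{v} \neq \bm{0}$ and $\bm{B}_{1} \in \mathcal{B}$, where I use that $\bm{q}^{-} = (\bm{1}^{T}\bm{B})^{--} = \bm{1}^{T}\bm{B}$. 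Left-multiplying by $\bm{B}$ turns these into the asserted family $\bm{x} = \bm{B}(\bm{I} \oplus \Delta^{-1}\bm{B}_{1}^{-}\bm{1}\bm{1}^{T}\bm{B})\bm{v}$.

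Two points deserve care. First, Lemma~\ref{L-minqxAxp} lists the \emph{regular} minimizers $\bm{u}$, whereas the score vectors range over $\bm{B}\bm{u}$ with $\bm{u} \neq \bm{0}$; here one notes that $\bm{B} = \bm{A}_{\lambda}^{\ast}$ is idempotent, $\bm{B}\bm{B} = \bm{B}$, and has positive entries, so every $\bm{x} = \bm{B}\bm{u}$ is regular and satisfies $\bm{x} = \bm{B}\bm{x}$. Hence $\{\bm{B}\bm{u} : \bm{u} \neq \bm{0}\} = \{\bm{B}\bm{u} : \bm{u}\ \text{regular}\}$, so restricting to regular $\bm{u}$ before applying the lemma discards no minimizing score vector, and conversely passing to $\bm{x} = \bm{B}\bm{u}$ for the minimizers $\bm{u}$ returned by the lemma introduces no non-minimizer. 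Second, and this is the step I expect to be the real obstacle, the threshold coming out of the lemma is the componentwise quantity $\Delta^{-1}p_{i}q_{j}^{-1} = \Delta^{-1}(\bm{1}^{T}\bm{B})_{j}$, while the statement uses the single scalar $\Delta^{-1}$; these coincide precisely when $\bm{1}^{T}\bm{B} = \bm{1}^{T}$, i.e.\ when the columns of $\bm{B}$ have unit maxima. Since rescaling the columns of $\bm{B}$ changes neither the family $\bm{x} = \bm{B}\bm{u}$ nor the value $\Delta$ (the formula $(\bm{B}(\bm{1}^{T}\bm{B})^{-})^{-}\bm{1}$ is invariant under such rescaling), I would fold this normalization into the construction of $\bm{B}$, exactly as is done in Example~\ref{X-A}; after it, $(\bm{1}^{T}\bm{B})_{j} = 1$ for all $j$ and the threshold collapses to $\Delta^{-1}$, which closes the proof.
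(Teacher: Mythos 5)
Your proposal matches the paper's proof: the same reduction of the objective $\bm{1}^{T}\bm{B}\bm{u}(\bm{B}\bm{u})^{-}\bm{1}$ to problem \eqref{P-minqxAxp} via the substitutions $\bm{A}=\bm{B}$, $\bm{p}=\bm{1}$, $\bm{q}^{-}=\bm{1}^{T}\bm{B}$, the same appeal to Lemma~\ref{L-minqxAxp}, and the same return to $\bm{x}=\bm{B}\bm{u}$. Your two additional observations --- using the idempotency and positivity of $\bm{B}$ to reconcile nonzero versus regular $\bm{u}$, and normalizing the columns of $\bm{B}$ so that the lemma's column-dependent threshold $\Delta^{-1}(\bm{1}^{T}\bm{B})_{j}$ collapses to the scalar $\Delta^{-1}$ appearing in the statement --- are points the paper's proof passes over silently, and you handle them correctly.
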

\begin{proof}
We reduce the problem under study to \eqref{P-minqxAxp} by the substitutions $\bm{q}^{-}=\bm{1}^{T}\bm{B}$, $\bm{A}=\bm{B}$, $\bm{p}=\bm{1}$ and $\bm{x}=\bm{u}$. Since the matrix $\bm{B}$ has only nonzero entries, the regularity conditions of Lemma~\ref{L-minqxAxp} are satisfied. Application of this lemma involves evaluating the minimum value $\Delta=(\bm{B}(\bm{1}^{T}\bm{B})^{-})^{-}\bm{1}$, calculating the sparsified matrix $\widehat{\bm{B}}$, and forming the matrix set $\mathcal{B}$. The solution is given by $\bm{u}=(\bm{I}\oplus\Delta^{-1}\bm{B}_{1}^{-}\bm{1}\bm{1}^{T}\bm{B})\bm{v}$, where $\bm{v}\ne\bm{0}$ and $\bm{B}_{1}\in\mathcal{B}$. Turning back to the vector $\bm{x}=\bm{B}\bm{u}$ yields the desired result.
%\qed
\end{proof}
\begin{example}
Consider the solution obtained in the form \eqref{E-xBu} in Example~\ref{X-A} for the matrix \eqref{E-A}. To apply the result of Theorem~\ref{T-min1BuBu1}, we successively calculate
\begin{equation*}
\bm{1}^{T}\bm{B}
=
\left(
\begin{array}{cc}
1 & 1
\end{array}
\right),
\qquad
\bm{B}(\bm{1}^{T}\bm{B})^{-}
=
\left(
\begin{matrix}
1/3
\\
1
\\
1/2
\\
1/2
\end{matrix}
\right),
\qquad
\Delta
=
(\bm{B}(\bm{1}^{T}\bm{B})^{-})^{-}\bm{1}
=
3,
\end{equation*}
and
\begin{equation*}
\widehat{\bm{B}}
=
\left(
\begin{matrix}
1/3 & 0
\\
1 & 1
\\
1/2 & 1/2
\\
1/2 & 1/2
\end{matrix}
\right).
\end{equation*}

We now examine the matrices obtained from $\widehat{\bm{B}}$ by leaving one nonzero entry in each row. For instance, consider the matrix
\begin{equation*}
\bm{B}_{1}
=
\left(
\begin{matrix}
1/3 & 0
\\
1 & 0
\\
1/2 & 0
\\
1/2 & 0
\end{matrix}
\right),
\end{equation*}
which leaves the first column in $\widehat{\bm{B}}$ unchanged, and has all zero entries in the second. We have
\begin{equation*}
\bm{B}_{1}^{-}\bm{1}
=
\left(
\begin{matrix}
3
\\
0
\end{matrix}
\right),
\qquad
\bm{B}_{1}^{-}\bm{1}\bm{1}^{T}\bm{B}
=
\left(
\begin{matrix}
3 & 3
\\
0 & 0
\end{matrix}
\right),
\qquad
\bm{I}\oplus\Delta^{-1}\bm{B}_{1}^{-}\bm{1}\bm{1}^{T}\bm{B}
=
\left(
\begin{matrix}
1 & 1
\\
0 & 1
\end{matrix}
\right),
\end{equation*}
and
\begin{equation*}
\bm{B}(\bm{I}\oplus\Delta^{-1}\bm{B}_{1}^{-}\bm{1}\bm{1}^{T}\bm{B})
=
\left(
\begin{matrix}
1/3 & 1/3
\\
1 & 1
\\
1/2 & 1/2
\\
1/2 & 1/2
\end{matrix}
\right).
\end{equation*}

As both columns in the last matrix coincide, we take one to write the least differentiating solution in the form
\begin{equation*}
\bm{x}=
\left(\begin{array}{cccc}
1/3
&
1
&
1/2
&
1/2
\end{array}
\right)^{T}
v,
\qquad
v
\ne
0.
\end{equation*}

Calculations with the other matrices obtained from $\widehat{\bm{B}}$ yield the same result, and are thus omitted.
\end{example}

To obtain the most differentiating score vectors we need to solve the problem
\begin{equation*}
\begin{aligned}
&
\text{maximize}
&&
\bm{1}^{T}\bm{B}\bm{u}(\bm{B}\bm{u})^{-}\bm{1}.
\end{aligned}
%\label{P-max1BuBu1}
\end{equation*}

Similarly as before, we reduce this problem to \eqref{P-maxqxAxp}, conclude that the conditions of Lemma~\ref{L-maxqxAxp} are fulfilled, and finally apply this lemma to obtain the next solution. 
\begin{theorem}
Let $\bm{B}=(\bm{b}_{j})$ be a matrix with columns $\bm{b}_{j}=(b_{ij})$, and $\bm{B}_{sk}$ denote the matrix obtained from $\bm{B}$ by fixing the entry $b_{sk}$ and replacing the others by $0$.

Then, the most differentiating score vectors are given by
\begin{equation*}
\bm{x}
=
\bm{B}(\bm{I}\oplus\bm{B}_{sk}^{-}\bm{B})\bm{v},
\quad
\bm{v}\ne\bm{0},
\quad
k
=
\arg\max_{j}\bm{1}^{T}\bm{b}_{j}\bm{b}_{j}^{-}\bm{1},
\quad
s
=
\arg\max_{i}b_{ik}^{-1}.
\end{equation*}
\end{theorem}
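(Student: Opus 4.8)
The plan is to follow exactly the pattern of the proof of Theorem~\ref{T-min1BuBu1}, but to invoke the maximization result of Lemma~\ref{L-maxqxAxp} in place of Lemma~\ref{L-minqxAxp}. First I would recognize the objective $\bm{1}^{T}\bm{B}\bm{u}(\bm{B}\bm{u})^{-}\bm{1}$ as an instance of problem \eqref{P-maxqxAxp} under the substitutions $\bm{q}^{-}=\bm{1}^{T}\bm{B}$, $\bm{A}=\bm{B}$, $\bm{p}=\bm{1}$, and $\bm{x}=\bm{u}$, so that the vector to be found is $\bm{u}$, and the score vector is recovered as $\bm{x}=\bm{B}\bm{u}$.

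Next I would verify the hypotheses of Lemma~\ref{L-maxqxAxp}. Since $\bm{B}=\bm{A}_{\lambda}^{\ast}$ is the Kleene star of a matrix with only positive entries (built from the positive pairwise comparison matrix $\bm{A}$), every column $\bm{b}_{j}$ of $\bm{B}$ is regular, the vector $\bm{p}=\bm{1}$ is regular, and $\bm{q}$ is regular because $\bm{q}^{-}=\bm{1}^{T}\bm{B}$ has no zero components. Hence the lemma applies, the maximum value of the objective is $\Delta=\bm{q}^{-}\bm{A}^{-}\bm{p}=\bm{1}^{T}\bm{B}\bm{B}^{-}\bm{1}$, and all regular $\bm{u}$ are described by $\bm{u}=(\bm{I}\oplus\bm{A}_{sk}^{-}\bm{A})\bm{v}$ with $\bm{v}\ne\bm{0}$ and the indices $s,k$ given in the lemma.

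It then remains to rewrite the index conditions in the notation of the theorem. Using $q_{j}^{-1}=(\bm{1}^{T}\bm{B})_{j}=\bm{1}^{T}\bm{b}_{j}$ and $p_{i}=1$, the condition $k=\arg\max_{j}q_{j}^{-1}\bm{a}_{j}^{-}\bm{p}$ turns into $k=\arg\max_{j}\bm{1}^{T}\bm{b}_{j}\bm{b}_{j}^{-}\bm{1}$, and $s=\arg\max_{i}a_{ik}^{-1}p_{i}$ turns into $s=\arg\max_{i}b_{ik}^{-1}$, while $\bm{A}_{sk}$ becomes $\bm{B}_{sk}$. Substituting $\bm{u}=(\bm{I}\oplus\bm{B}_{sk}^{-}\bm{B})\bm{v}$ into $\bm{x}=\bm{B}\bm{u}$ gives the asserted description of the most differentiating score vectors.

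The argument involves no real obstacle; the only points needing care are the regularity check for $\bm{q}$ (equivalently, that $\bm{1}^{T}\bm{B}$ has no zero entries, which holds since $\bm{B}$ is a Kleene star with positive entries) and the identification $q_{j}^{-1}=\bm{1}^{T}\bm{b}_{j}$, which is what makes the maximand in the index condition of Lemma~\ref{L-maxqxAxp} coincide with the expression $\bm{1}^{T}\bm{b}_{j}\bm{b}_{j}^{-}\bm{1}$ appearing in the theorem.
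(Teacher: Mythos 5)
Your proposal is correct and coincides with the paper's own (only sketched) argument: reduce the maximization of $\bm{1}^{T}\bm{B}\bm{u}(\bm{B}\bm{u})^{-}\bm{1}$ to problem \eqref{P-maxqxAxp} via $\bm{q}^{-}=\bm{1}^{T}\bm{B}$, $\bm{A}=\bm{B}$, $\bm{p}=\bm{1}$, check regularity using the positivity of $\bm{B}=\bm{A}_{\lambda}^{\ast}$, apply Lemma~\ref{L-maxqxAxp}, and substitute back into $\bm{x}=\bm{B}\bm{u}$. Your translation of the index conditions via $q_{j}^{-1}=\bm{1}^{T}\bm{b}_{j}$ and $p_{i}=1$ is exactly what yields the stated formulas, so nothing is missing.
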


\begin{example}
We start with the solution at \eqref{E-xBu}, and compute 
$
\bm{1}^{T}\bm{b}_{1}
=
1$,
$
\bm{1}^{T}\bm{b}_{2}
=
1$,
$
\bm{b}_{1}^{-}\bm{1}
=
3$,
and
$
\bm{b}_{2}^{-}\bm{1}
=
6$.
Since
$
\bm{1}^{T}\bm{b}_{1}\bm{b}_{1}^{-}\bm{1}
=
3$
and
$
\bm{1}^{T}\bm{b}_{2}\bm{b}_{2}^{-}\bm{1}
=
6$,
we take
$
k=2$, $s=1$.

Next, we have
\begin{equation*}
\bm{B}_{12}
=
\left(
\begin{matrix}
0 & 1/6
\\
0 & 0
\\
0 & 0
\\
0 & 0
\end{matrix}
\right),
\quad
\bm{I}\oplus\bm{B}_{12}^{-}\bm{B}
=
\left(
\begin{matrix}
1 & 0
\\
2 & 1
\end{matrix}
\right),
\quad
\bm{B}(\bm{I}\oplus\bm{B}_{12}^{-}\bm{B})
=
\left(
\begin{matrix}
1/3 & 1/6
\\
2 & 1
\\
1 & 1/2
\\
1 & 1/2
\end{matrix}
\right).
\end{equation*}

Since the columns in the last matrix are collinear, we take one of them, say, the second, to write the most differentiating vector as
\begin{equation*}
\bm{x}
=
\left(
\begin{array}{cccc}
1/6
&
1
&
1/2
&
1/2
\end{array}
\right)^{T}
v,
\qquad
v\ne0.
\end{equation*}
\end{example}

\subsection*{Acknowledgements}
This work was supported in part by the Russian Foundation for Humanities (grant No. 16-02-00059). The author is very grateful to the referees for their valuable comments and suggestions, which have been incorporated into the revised version of the manuscript.

\bibliographystyle{abbrvurl}

\bibliography{Methods_of_tropical_optimization_in_rating_alternatives_based_on_pairwise_comparisons}

\end{document}